\newtheorem{theorem}{Theorem}[section]
\newtheorem{lemma}[theorem]{Lemma}
\newtheorem{proposition}[theorem]{Proposition}
\newcounter{paraga}[subsection]
\renewcommand{\theparaga}{{\bf\arabic{paraga}.}}
\newcommand{\paraga}{\medskip \addtocounter{paraga}{1} 
\noindent{\theparaga\ } }
\begin{document}

\def\MP{\,{<\hspace{-.5em}\cdot}\,}
\def\SP{\,{>\hspace{-.3em}\cdot}\,}
\def\PM{\,{\cdot\hspace{-.3em}<}\,}
\def\PS{\,{\cdot\hspace{-.3em}>}\,}
\def\EP{\,{=\hspace{-.2em}\cdot}\,}
\def\PP{\,{+\hspace{-.1em}\cdot}\,}
\def\PE{\,{\cdot\hspace{-.2em}=}\,}
\def\N{\mathbb N}
\def\C{\mathbb C}
\def\Q{\mathbb Q}
\def\R{\mathbb R}
\def\T{\mathbb T}
\def\A{\mathbb A}
\def\Z{\mathbb Z}
\def\demi{\frac{1}{2}}

\begin{titlepage}
\author{Abed Bounemoura~\footnote{abedbou@gmail.com, IHES, 35 route de Chartres, 91440 Bures-sur-Yvette (abed@ihes.fr)}}
\title{\LARGE{\textbf{The KAM theorem through Dirichlet's box and Khintchine's transference principles}}}
\end{titlepage}

\maketitle

\begin{abstract}
In this paper, we give a new proof of the classical KAM theorem which avoids small divisors and relies on two basic principles of Diophantine approximation: Dirichlet's box and Khintchine transference principles.  
\end{abstract}
 
\section{Introduction and statement of the result}\label{s1}

\subsection{Introduction}\label{s11}

\paraga KAM theory is part of the theory of perturbation of quasi-periodic motions in dynamical systems, which was initiated by Kolmogorov (\cite{Kol54}), Arnold (\cite{Arn63a},\cite{Arn63b}) and Moser (\cite{Mos62}) in the context of Hamiltonian systems. It is often considered ``as one of the most important achievements in the qualitative theory of ordinary differential equations in the whole second half of the twentieth century", and has been the subject of extensive investigations (see \cite{Sev03} and references therein).

\paraga A model problem, already considered by the founders of the theory (\cite{Kol53}, \cite{Arn61} and \cite{Mos66}), deals with vector fields on the torus and can be explained as follows. Let $\alpha$ be a non-zero vector in $\R^n$, and $X_\alpha=\alpha$ the constant vector field on the $n$-dimensional torus $\T^n=\R^n/\Z^n$ equals to $\alpha$. The integral curves of $X_\alpha$ describe, by definition, quasi-periodic motions with frequency $\alpha$, and the problem is to understand the fate of these solutions when a small but arbitrary perturbation $P$ is added to $X_\alpha$. In general, the vector field $X_\alpha$ is not structurally stable, as the perturbation will induce a shift of frequency so that the vector field $X_\alpha+P$ cannot be conjugated to $X_\alpha$. However, if the frequency $\alpha$ satisfies a Diophantine condition and $P$ is sufficiently regular, and if moreover we are allowed to first shift the frequency according to the perturbation, then the resulting vector field can indeed be conjugated to $X_\alpha$: this is the content of the classical KAM theorem, first proved by Arnold in \cite{Arn61}) (we refer to Theorem~\ref{th}, \S\ref{s12}, for a precise statement).

\paraga In the classical approach to the KAM theorem, and more generally in the classical theory of perturbation of quasi-periodic motions, a central role is played by the equation
\begin{equation}\label{eqlin}
[V,X_\alpha]=P-[P]_{\alpha}, \quad [P]_{\alpha}=\lim_{t \rightarrow +\infty}\frac{1}{t}\int_{0}^{t}(X_{\alpha}^{s})^* P ds
\end{equation}
where $V$ is the unknown vector field and $[P]_\alpha$ is the time-average of $P$ along the flow $X_\alpha^s$ of $X_\alpha$ ($(X_{\alpha}^{s})^* P$ is the pull-back of $P$ by $X_{\alpha}^{s}$). Assuming that $\alpha$ is non-resonant, that is if the components of $\alpha$ are independent over the field of rational numbers $\Q$, it follows from Birkhoff's ergodic theorem that $[P]_\alpha$ coincides with the space average, that is
\[ [P]_\alpha=[P]=\int_{\T^n}P \]
where the integration on $\T^n$ is with respect to its Haar measure. Equation~\eqref{eqlin}, which is usually called the homological equation, can be seen as the linearized version of the conjugacy equation we need to solve, and the time-one map $\Phi=V^1$ of the vector field $V$ is then used as a building block to construct the sought conjugacy by an iterative scheme. It is precisely in trying to solve the equation~\eqref{eqlin} that small divisors arise. Geometrically, one needs to integrate along the integral curves of $X_\alpha$, and these curves are not closed (they densely fill the torus $\T^n$). Analytically, one needs to invert the operator $[\,.\,,X_\alpha]$ acting on the space of smooth vector fields, and this operator is unbounded. Indeed, this operator can be diagonalized in a Fourier basis: letting $e_k(\theta)=e^{ik.\theta}$ for $k\in\Z^n$ and $V=(V_1,\dots,V_n)$, $P=(P_1,\dots,P_n)$, if $V_j=\sum_{k\in\Z^n}V_{j,k} e_k$ and $P_j=\sum_{k\in\Z^n}P_{j,k} e_k$ for $1\leq j \leq n$, then the solution is given by
\[ V_{j,0}=0, \quad V_{j,k}=(ik.\alpha)^{-1}P_{j,k}, \quad k\in \Z^n\setminus\{0\},  \]
and so the absolute values $|k.\alpha|^{-1}$ of its eigenvalues can be arbitrarily large if the supremum norm $|k|$ is arbitrarily large. It is then useful to quantify how the quantities $|k.\alpha|^{-1}$ grow with $|k|$ by introducing the function
\begin{equation}\label{eqpsi}
\Psi_\alpha(Q)=\max\left\{|k.\alpha|^{-1} \; | \; k\in \Z^n, \; 0<|k|\leq Q\right\}, \quad Q\geq 1. 
\end{equation} 
A vector $\alpha$ is said to satisfy a Diophantine condition if the function $\Psi_{\alpha}(Q)$ grows at most as a power of $Q$ (see \S\ref{s12} for a more precise definition). To overcome the effect of small divisors, for a given parameter $Q \geq 1$ a classical approach is to approximate $P$ by a trigonometric vector field $P_Q$ and to solve the approximate equation 
\begin{equation}\label{eqapp}
[V,X_\alpha]=P_Q-[P_Q]=P_Q-[P]
\end{equation}
using Fourier analysis as above: the norm of the vector field $V$, and hence the transformation $\Phi=V^1$, can be essentially controlled in terms of $\Psi(Q)$, and the term $P-P_Q$ is simply considered as an ``error" that causes no trouble in the iteration scheme. We refer to the very nice survey \cite{Pos01} for a detailed exposition of the classical approach in the Hamiltonian setting, and to \cite{Pos11} for a variant of the classical approach (proposed by Rüssmann in \cite{Rus10}, using a somehow ``optimal" approximation $P_Q$) in our setting. 

\paraga Recently, in a joint work with Fischler (\cite{BF12}), we proposed a fundamentally different approach to the classical theory of perturbation of quasi-periodic motions, replacing the small divisors problem by a method of periodic approximations. To explain this method, let us assume without loss of generality that $\alpha=(1,\alpha_1,\dots,\alpha_{n-1})$ with $|\alpha_j|\leq 1$ for $1\leq j \leq n$ (this can always be achieved, dividing $\alpha$ by its supremum norm $|\alpha|$ and re-ordering its components if necessary); such a simplification will enable us to deal with rational approximations only. So let $\omega \in \Q^n\setminus\{0\}$ be a rational vector with minimal denominator $q$, and consider the equation
\begin{equation}\label{eqlin2}
[V,X_\omega]=P-[P]_{\omega}, \quad [P]_{\omega}=\lim_{t \rightarrow +\infty}\frac{1}{t}\int_{0}^{t}(X_{\omega}^{s})^* P ds.
\end{equation}
It is easy to see that the time-average $[P]_{\omega}$ has the following simple expression
\[ [P]_\omega=q^{-1}\int_{0}^{q}P\circ X_{\omega}^t dt=\int_{0}^{1}P\circ X_{q\omega}^t dt. \]
Unlike equation~\eqref{eqlin}, equation~\eqref{eqlin2} is easily solved without Fourier expansions by the following simple integral formula
\[ V=q\int_{0}^{1}(P-[P]_\omega)\circ X_{q\omega}^t tdt \]
and there is no small divisors: geometrically, the integral curves of $X_\omega$ are closed, and analytically, the inverse operator of $[\,.\,,X_\omega]$ is bounded (by $q$, with respect to any translation-invariant norm on the space of vector fields on the torus). This was first used by Lochak in connection with the Nekhoroshev theorem (\cite{Loc92}). In \cite{BF12}, we proved (as a particular case) the following result in Diophantine approximation: if $\alpha$ is non-resonant, then there exists $n$ rational vectors $\omega_1,\dots,\omega_n$, of denominators $q_1,\dots,q_n$, such that $q_1\omega_1,\dots,q_n\omega_n$ form a basis of $\Z^n$ and such that, up to constants depending only on $n$, the distance $|\alpha-\omega_i|$ is bounded by $(q_iQ)^{-1}$ and the denominators $q_i$ are bounded by $\Psi_\alpha(Q)$, where $\Psi_\alpha$ is the function defined in~\eqref{eqpsi}. This result allows us to reduce the study of the equation~\eqref{eqlin} (or the approximate equation~\eqref{eqapp}) to the simpler equation~\eqref{eqlin2}, avoiding small divisors and the use of Fourier expansions as a consequence. Indeed, let $P_0=P$ and defined inductively $P_j=[P_{j-1}]_{\omega_j}$ for $1 \leq j \leq n$, then we solve the $n$ equations
\begin{equation}\label{eqlin22}
[V_j,X_{\omega_j}]=P_{j-1}-[P_{j-1}]_{\omega_j}, \quad 1 \leq j \leq n,
\end{equation}
which are of the type~\eqref{eqlin2}, and whose solutions are ``controlled" in terms of $q_j$, hence in terms of $\Psi_\alpha(Q)$. Then $|\alpha-\omega_j|$ are simply considered as ``errors", $P_n=[\cdots[P]_{\omega_1}\cdots]_{\omega_n}=[P]$ as one can easily check using the fact that $q_1\omega_1, \dots, q_n\omega_n$ is a basis of the lattice $\Z^n$, and the composition of time-one maps $V_1^1 \circ \cdot \circ V_n^1$ will give a transformation $\Phi$ which has a similar effect as the transformation constructed by solving the equation~\eqref{eqapp}. 

\paraga There is a duality between the classical approach and the approach given in \cite{BF12}. The classical approach deals with equation~\eqref{eqlin} by replacing the perturbation by a simpler (trigonometric) perturbation keeping the integrable vector field fixed (this is equation~\eqref{eqapp}), whereas in \cite{BF12}, we deal with equation~\eqref{eqlin} by replacing the integrable vector field by simpler (periodic) vector fields, keeping the perturbation fixed (these are equations~\eqref{eqlin22}). In the words of Lochak (see \cite{Loc02}), the classical approach pertains to linear Diophantine approximation, whereas \cite{BF12} should pertain to simultaneous Diophantine approximation.  

\paraga The purpose of this article is to present yet another approach to the KAM theorem, which is in some sense intermediate between the usual approach (for instance \cite{Pos11}) and the approach given in \cite{BF12}, as it mixes both linear and simultaneous Diophantine approximation. The idea is quite simple. Instead of using $n$ rational approximations with suitable properties (as was done in \cite{BF12}), the existence of which is non-trivial, we will just use one rational approximation, the existence of which is trivial by Dirichlet's box principle: for any $Q\geq 1$, there exists a rational vector $\omega$ of denominator $q$ such that $|\alpha-\omega| \leq (qQ)^{-1}$ and $1 \leq q\leq Q^{n-1}$. Then we approximate the integrable vector field $X_\alpha$ by the periodic vector field $X_\omega$, we solve equation~\eqref{eqlin2} and we consider $|\alpha-\omega|$ as an ``error". But this is clearly not sufficient, as the equation involves the average $[P]_\omega$ instead of $[P]$, so we need furthermore to be able to consider $[P]_\omega-[P]$ as an ``error". It is easy to see that the Fourier expansion of $[P]_\omega-[P]$ contains only harmonics associated to integers $k$ such that $k.\omega=0$, hence using the decay of Fourier coefficients of regular vector fields, $[P]_\omega-[P]$ can be considered small enough provided that $|k|$ is large enough. This will be proved in Proposition~\ref{proputile} below, using Khintchine transference principle which makes a connection between linear and simultaneous Diophantine approximation. One can say that our approach here avoids small divisors, since we deal only with the equation~\eqref{eqlin2}, even though we make use of Fourier expansions to estimate the remainder.
  
\subsection{Statement of the result}\label{s12}

\paraga Before stating the result, let us describe precisely the setting.

Let $n \in \N$, $n\geq 2$. Without loss of generality, we may assume that 
\[ \alpha=(1,\tilde{\alpha})\in \R \times \R^{n-1}, \quad \tilde{\alpha}=(\alpha_1,\dots,\alpha_{n-1}) \in [-1,1]^{n-1}. \]
First we need to impose a Diophantine condition on $\alpha$, and for simplicity we will assume that $\alpha$ satisfies a classical Diophantine condition. For $\tau \geq 0$, we define
\[ \Omega^{n-1}(\tau)=\left\{x \in [-1,1]^{n-1} \; | \; \exists \gamma>0, \; \forall k\in \Z^{n-1}\setminus \{0\}, \; ||k.x||_{\Z} \geq \gamma |k|^{-(1+\tau)(n-1)}  \right\}, \]
where $||\,.\,||_{\Z}$ denotes the distance to the lattice $\Z$, that is $||y||_{\Z}=\min_{k \in \Z}|y-k|$. Hence $\tilde{\alpha} \in \Omega^{n-1}(\tau)$ if and only if there exists a constant $\gamma=\gamma(\tilde{\alpha})$ such that for all $k\in \Z^{n-1}\setminus \{0\}$, $||k.\tilde{\alpha}||_{\Z} \geq \gamma |k|^{-(1+\tau)(n-1)}$, and it is not a restriction to assume that $\gamma \leq 1$.

Then we need to impose a regularity condition on the perturbation, and we will assume that it is real analytic. Recall that $\T^n=\R^n/\Z^n$, and we define $\T_{\C}^{n}=\C^n/\Z^n$. For $z=(z_1,\dots,z_n)\in \C^n$, we define $|z|=\max_{1\leq j \leq n}|z_j|$, and given $s>0$, we define a complex neighbourhood of $\T^n$ in $\T_{\C}^{n}$ by 
\[ \T^n_s=\{\theta\in \T_{\C}^{n}\; | \; |(\mathrm{Im}(\theta_1),\dots,\mathrm{Im}(\theta_n))|< s\}. \]  
Without loss of generality, we may also assume that $s\leq 1$.

For a small parameter $\varepsilon \geq 0$, we will consider bounded real-analytic vector fields on $\T^n_{s}$, of the form
\begin{equation}\label{H}
X=X_\alpha+P, \quad X_\alpha=\alpha=(1,\tilde{\alpha}), \quad \tilde{\alpha}\in \Omega^{n-1}(\tau), \quad |P|_{s}=\sup_{z\in \T^n_s}|P(z)|\leq \varepsilon. \tag{$*$}
\end{equation}
By real-analytic, we mean that the vector field is analytic and is real valued for real arguments. For any such vector field $Y$, we shall denote by $Y^t$ its time-$t$ map for values of $t\in\C$ which makes sense, and given another vector field $Z$, we denote by $[Y,Z]$ their Lie bracket. Moreover, for a real-analytic embedding $\Phi : \T^n_{r} \rightarrow \T_{s}^{n}$, $r\leq s$, and for a real-analytic vector field $Y$ which is well-defined on the image of $\Phi$, we let $\Phi^*Y$ be the pull-back $Y$, which is well-defined on $\T^n_{r}$.

\paraga We can finally state the result, whose content is the classical KAM theorem for constant vector fields on the torus first proved by Arnold.

\begin{theorem}\label{th}
Let $X$ be as in~\eqref{H}. Then there exist constants $\varepsilon_*\leq 1$ and $C_1,C_2 \geq 1$, which depend only on $n$, $s$ and $\tilde{\alpha}$, such that if $\varepsilon \leq \varepsilon_*$, there exist a unique constant $\beta\in\C^n$ and a real-analytic embedding $\Phi : \T^n_{s/2} \rightarrow \T^n_{s}$ such that
\[ \Phi^*(X+X_\beta)=X_\alpha \]
with the estimates
\[ |\Phi-\mathrm{Id}|_{s/2}\leq C_1\varepsilon, \quad |\beta|\leq C_2\varepsilon.\]    
\end{theorem}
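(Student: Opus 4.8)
The plan is to prove the theorem by a KAM iteration in which, at each step, the small–divisor equation $[V,X_\alpha]=[P]-P$ is replaced by the explicitly solvable homological equation attached to a \emph{periodic} vector field $X_{\omega}$ coming from Dirichlet's box principle, the resulting frequency drift being absorbed by the counterterm $X_\beta$.

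\emph{One step.} Suppose we have reached $X^{(m-1)}=X_\alpha+P^{(m-1)}$ with $|P^{(m-1)}|_{s_{m-1}}\le\varepsilon_{m-1}$ on $\T^n_{s_{m-1}}$. Choose a parameter $Q_m\ge 1$; Dirichlet's box principle produces a rational $\omega_m$ of minimal denominator $q_m\le Q_m^{n-1}$ with $|\alpha-\omega_m|\le(q_mQ_m)^{-1}$. Solve $[V_m,X_{\omega_m}]=[P^{(m-1)}]_{\omega_m}-P^{(m-1)}$ by the integral formula recalled in \S\ref{s11}; since the flow of $X_{\omega_m}$ is a real translation, this costs no loss of width and $|V_m|_{s_{m-1}}\le 2q_m\varepsilon_{m-1}$. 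Set $\Phi_m=V_m^1$ — an embedding $\T^n_{s_m}\to\T^n_{s_{m-1}}$ once $|V_m|\ll\sigma_m:=s_{m-1}-s_m$ — and $\beta_m=-[P^{(m-1)}]$. Expanding $\Phi_m^*$ as a Lie series and writing $[V_m,X_\alpha]=[V_m,X_{\omega_m}]+[V_m,X_{\alpha-\omega_m}]$ yields
\[ \Phi_m^*\big(X^{(m-1)}+X_{\beta_m}\big)=X_\alpha+P^{(m)},\qquad P^{(m)}=\big([P^{(m-1)}]_{\omega_m}-[P^{(m-1)}]\big)+[V_m,X_{\alpha-\omega_m}]+\mathcal R_m, \]
with $\mathcal R_m$ the higher Lie brackets. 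The first term is controlled by Proposition~\ref{proputile}: its Fourier support lies in $\{k:k\cdot\omega_m=0\}$, on which the Diophantine condition on $\tilde\alpha$ together with $|\alpha-\omega_m|\le(q_mQ_m)^{-1}$ forces $|k|$ to be bounded below by a positive power of $q_mQ_m$ (exponent $1/((1+\tau)(n-1)+1)$), so by analyticity this term is $\varepsilon_{m-1}$ times a factor exponentially small in $Q_m$; Cauchy's inequality gives $|[V_m,X_{\alpha-\omega_m}]|_{s_m}\lesssim\sigma_m^{-1}|V_m|\,|\alpha-\omega_m|\lesssim\sigma_m^{-1}\varepsilon_{m-1}Q_m^{-1}$ and $|\mathcal R_m|_{s_m}\lesssim\sigma_m^{-1}q_m\varepsilon_{m-1}^2$. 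Hence $\varepsilon_m$ can be taken of the shape $C\sigma_m^{-1}\big(q_m\varepsilon_{m-1}^2+\varepsilon_{m-1}Q_m^{-1}\big)$ plus a term exponentially small in $Q_m$.

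\emph{Iteration, limit and uniqueness.} One then picks the sequences $(Q_m)$, $(\sigma_m)$ with $\sum_m\sigma_m\le s/2$ so that $\varepsilon_m\to 0$, using crucially that in our setting $|V_m|$ depends only on the denominator $q_m\le Q_m^{n-1}$ (there is no derivative loss in the homological equation, unlike in the small–divisor case) to arrange in addition $\sum_m q_m\varepsilon_{m-1}\lesssim\varepsilon$ and $\sum_m\varepsilon_m\lesssim\varepsilon$; this is a standard ``iteration‑lemma'' bookkeeping. Passing to the limit, $\Phi:=\lim_m\Phi_1\circ\cdots\circ\Phi_m$ is a real-analytic embedding $\T^n_{s/2}\to\T^n_{s}$ with $|\Phi-\mathrm{Id}|_{s/2}\le\sum_m|\Phi_m-\mathrm{Id}|\lesssim\sum_m q_m\varepsilon_{m-1}\le C_1\varepsilon$, the constant $\beta:=\sum_m\beta_m$ satisfies $|\beta|\le\sum_m\varepsilon_{m-1}\le C_2\varepsilon$, and $\Phi^*(X+X_\beta)=X_\alpha$ since $P^{(m)}\to 0$. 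For uniqueness, if $(\Phi,\beta)$ and $(\Psi,\beta')$ both work then $(\Psi^{-1}\circ\Phi)_*X_\alpha=X_\alpha+\Psi^*X_{\beta'-\beta}$; taking the space average of this identity (the left-hand increment has zero average) forces $\beta=\beta'$, and then $\partial_\alpha(\Psi^{-1}\circ\Phi-\mathrm{Id})=0$ together with the non-resonance of $\alpha$ gives $\Phi=\Psi$.

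The crux is the one-step estimate, and within it the calibration of $Q_m$: it must be taken large enough to make both the ``linear'' remainder $\varepsilon_{m-1}Q_m^{-1}$ and the average discrepancy $[P^{(m-1)}]_{\omega_m}-[P^{(m-1)}]$ suitably small, yet $Q_m$ governs $q_m\le Q_m^{n-1}$ and hence the size of the transformation $\Phi_m$; reconciling this with the convergence of the scheme, with the fixed loss budget $\sum_m\sigma_m\le s/2$, and with the linear bound $|\Phi-\mathrm{Id}|=O(\varepsilon)$ is the delicate point, and it is precisely there that the quantitative form of Khintchine's transference principle packaged in Proposition~\ref{proputile} is what makes the term $[P^{(m-1)}]_{\omega_m}-[P^{(m-1)}]$ negligible.
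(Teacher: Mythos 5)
Your overall strategy is the same as the paper's: Dirichlet's box principle produces the rational approximation $\omega_m$, the periodic homological equation is solved by the explicit integral formula with no loss of width and $|V_m|\le q_m\varepsilon_{m-1}$, the Khintchine transference (packaged in Proposition~\ref{proputile}) controls $[P^{(m-1)}]_{\omega_m}-[P^{(m-1)}]$ through Fourier decay, and the delicate calibration of $Q_m$ that you single out as the crux is indeed where the work is; the paper handles it with the choices $\varepsilon_m=b^{-m}\varepsilon$, $Q_m=b^{n^{-1}m}Q$, $\sigma_m=2^{-m-2}s$, designed so that $Q_m^n\varepsilon_m$ is constant while $Q_m^{a^{-1}}\sigma_m$ grows geometrically and $\sum_j Q_j^{n-1}\varepsilon_j\lesssim Q^{n-1}\varepsilon$.

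There is, however, a genuine gap in your passage to the limit. From the recursion $X^{(m)}=\Phi_m^*(X^{(m-1)}+X_{\beta_m})$ one obtains
\[
X^{(m)}=(\Phi_1\circ\cdots\circ\Phi_m)^*(X+X_{\beta_1})+\sum_{j=2}^{m}(\Phi_j\circ\cdots\circ\Phi_m)^*X_{\beta_j},
\]
and since $\Phi_j^*X_{\beta}$ is \emph{not} a constant vector field when $\Phi_j\neq\mathrm{Id}$, the right-hand side is not $(\Phi_1\circ\cdots\circ\Phi_m)^*(X+X_{\beta_1+\cdots+\beta_m})$. Consequently $\Phi^*(X+X_\beta)$ with $\beta=\sum_m\beta_m$ differs from $\lim_m X^{(m)}=X_\alpha$ by $\sum_{j\ge 2}\bigl(\Phi^*-\Psi_j^*\bigr)X_{\beta_j}$, where $\Psi_j=\Phi_j\circ\Phi_{j+1}\circ\cdots$; this error is $O(\varepsilon^2)$ but is not zero, so ``$\Phi^*(X+X_\beta)=X_\alpha$ since $P^{(m)}\to 0$'' is not established. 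The paper sidesteps this precisely: rather than adding a counterterm to the \emph{current} vector field and summing, it parametrizes the frequency of the \emph{original} vector field on a shrinking ball, via the translation $\varphi_1(x)=x-[P]$ in Proposition~\ref{step}, which yields the identity $\Phi_1^*(X_{\varphi_1(x)}+P)=X_x+P^+$ for all $x\in B_{c\varepsilon}(\alpha)$; the compositions $\varphi^{m+1}=\varphi^m\circ\varphi_{m+1}$, $\Phi^{m+1}=\Phi^m\circ\Phi_{m+1}$ then telescope exactly (the nesting $d\varepsilon_{m+1}=c\varepsilon_m$ being arranged for this purpose), keeping the original $P$ on the left throughout, and $\beta=\varphi(\alpha)-\alpha$ emerges at the limit with no spurious residue. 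Your scheme can be repaired either by adopting this parametrization or by explicitly tracking the quadratically small non-constant residue inside $P^{(m)}$, but as written the final conjugacy identity does not follow. (Your uniqueness argument is also not airtight — the space average of $\Psi^*X_{\beta-\beta'}$ is not $\beta-\beta'$ in general — though I note the paper itself does not supply a proof of the uniqueness of $\beta$ beyond stating it.)
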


Let us now add some remarks on the above theorem.

Firstly, we only considered vectors $\alpha$ satisfying a classical Diophantine condition, but as in \cite{BF12}, the approach can be easily extended to a more general class of frequency vectors, even though we don't know yet what is the weakest condition on $\alpha$ we can reach through our method.

Secondly, also as in \cite{BF12}, we have decided to formulate our result in the setting of perturbation of constant vector fields on the torus, but using further elementary techniques, for instance as described in \cite{Pos01}, we could have formulated our result in the context of perturbation of integrable Hamiltonian systems without any difficulties, provided the  Hamiltonian is real-analytic and the integrable Hamiltonian is non-degenerate in the sense of Kolmogorov.

Finally, the constants $\varepsilon_*$, $C_1$, $C_2$ in the statement depend on $n$, $s$ and on $\tilde{\alpha}$ only through $\tau$, $\gamma$ and the constant $\bar{\gamma}$ that appears in Theorem~\ref{thK}. We could have easily provided explicit values for these constants, but for clarity we decided not to do so, and in the sequel, when convenient, we will sometimes use a $\cdot$ in replacement of any constant $C \geq 1$ depending only on $n,s,\tau,\gamma$ and $\bar{\gamma}$: that is, an expression of the form $u \MP v$  means that there exists a constant $C\geq 1$, that depends only on the above set of parameters, such that $u \leq Cv$. Similarly, we will use the notations $u \PM v$, $u \EP v$ and $u \PE v$.

\section{Proof of Theorem~\ref{th}}\label{s2}

The proof of Theorem~\ref{th} will be given in \S\ref{s23}, based on a quasi-periodic averaging result we will prove in \S\ref{s22}. Such a deduction is classical, but the novelty lies in the proof of the quasi-periodic averaging result, which will simply follow from a consequence of Dirichlet's box and Khintchine transference principles, as explained in \S\ref{s21}, and from simple analytical estimates that are exposed in the Appendix~\ref{app}.  

\subsection{Dirichlet's approximation and Khintchine transference theorems}\label{s21}

\paraga We start by recalling the classical theorem of Dirichlet on the approximation of an arbitrary non-zero vector by vector with rational components.

\begin{theorem}[Dirichlet]\label{thD}
Let $x\in \R^{n-1} \setminus \{0\}$, and $Q\geq 1$. Then there exists $(q,p)\in \N \times \Z^{n-1}$ such that
\[ |qx-p| \leq Q^{-1}, \quad 1 \leq q \leq Q^{n-1}. \]
\end{theorem}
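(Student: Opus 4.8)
The plan is to prove this by Dirichlet's box (pigeonhole) principle applied in the unit cube $[0,1)^{n-1}$. Write $\{t\}\in[0,1)$ for the fractional part of $t\in\R$. First I would reduce the statement to finding two ``close'' points among the $\lfloor Q^{n-1}\rfloor+1$ points
\[ P_j=(\{jx_1\},\dots,\{jx_{n-1}\})\in[0,1)^{n-1},\qquad j=0,1,\dots,\lfloor Q^{n-1}\rfloor, \]
namely indices $j_1<j_2$ with $\|\{j_2x_i\}-\{j_1x_i\}\|_{\Z}\le Q^{-1}$ for every $i$. Granting such a pair, set $q=j_2-j_1$: the numbers $qx_i$ and $\{j_2x_i\}-\{j_1x_i\}$ differ by an integer, so $\|qx_i\|_{\Z}\le Q^{-1}$ for all $i$; choosing $p_i$ to be an integer nearest to $qx_i$ then gives $|qx-p|\le Q^{-1}$, while $1\le q\le\lfloor Q^{n-1}\rfloor\le Q^{n-1}$, which is exactly the claim (the case $q=0$ is excluded since $j_1<j_2$).

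It remains to produce such a pair. When $Q$ is an integer this is immediate: partition $[0,1)^{n-1}$ into the $Q^{n-1}$ half-open subcubes of side $Q^{-1}$; there are $Q^{n-1}+1$ points and only $Q^{n-1}$ cells, so by the box principle two of the $P_j$ lie in a common cell and are within $Q^{-1}$ of each other in each coordinate. For a general real $Q\ge1$ the subcubes of side $Q^{-1}$ no longer tile $[0,1)^{n-1}$, and this is the only point that needs an extra observation; I would pass to a volume form of the box principle. Attach to each $P_j$ the open box $B_j=\prod_{i=1}^{n-1}\bigl(\{jx_i\}-\tfrac1{2Q},\{jx_i\}+\tfrac1{2Q}\bigr)$ viewed in $(\R/\Z)^{n-1}$, of measure $Q^{-(n-1)}$. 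Since $\lfloor Q^{n-1}\rfloor+1>Q^{n-1}$, the total measure $(\lfloor Q^{n-1}\rfloor+1)Q^{-(n-1)}$ exceeds $1=\mathrm{vol}\bigl((\R/\Z)^{n-1}\bigr)$, so the $B_j$ cannot be pairwise disjoint, and any two that intersect give indices $j_1\ne j_2$ with $\|\{j_2x_i\}-\{j_1x_i\}\|_{\Z}<Q^{-1}$ for all $i$, as required. (Equivalently one may invoke Minkowski's first theorem for the symmetric convex body $\{(y_0,y)\in\R\times\R^{n-1}:|y_0|\le Q^{n-1},\ |x_iy_0-y_i|\le Q^{-1}\}$, of volume $2^n$: it contains a nonzero integer point $(q,p)$, and $q\ne0$ since $q=0$ would force $|p_i|\le Q^{-1}<1$, hence $p=0$.)

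I do not expect a genuine obstacle here: this is the classical pigeonhole proof of Dirichlet's theorem, and the hypothesis $x\ne0$ plays no role in the existence statement. The only mildly delicate step is the one indicated above: for non-integer $Q$ the crude partition into subcubes of side $Q^{-1}$ must be replaced by the measure-theoretic (or Minkowski) version, so that the essential count $\lfloor Q^{n-1}\rfloor+1>Q^{n-1}$ can still be exploited; everything else is elementary bookkeeping with fractional parts.
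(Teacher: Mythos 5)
The paper itself gives no proof of Dirichlet's theorem; it simply points to the standard references (Schmidt, Cassels), so there is no ``paper's approach'' to compare against. Your proof is correct and is indeed the classical pigeonhole argument that those references give. The one step that genuinely requires care is the one you flag: when $Q$ is not an integer, the naive partition of $[0,1)^{n-1}$ into $Q^{n-1}$ cells of side $Q^{-1}$ is unavailable, and your measure-theoretic form of the box principle on $(\R/\Z)^{n-1}$ handles this cleanly. Note that since $Q\geq 1$, each coordinate interval $(\{jx_i\}-\tfrac1{2Q},\{jx_i\}+\tfrac1{2Q})$ has length $Q^{-1}\leq 1$ and so does not wrap past itself, which is what makes the box $B_j$ have measure exactly $Q^{-(n-1)}$; this is worth saying explicitly.

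One small caveat in the parenthetical Minkowski variant: the argument that $q\neq 0$ uses $|p_i|\leq Q^{-1}<1$, which requires $Q>1$. At $Q=1$ Minkowski's theorem applied to that body (of volume exactly $2^n$) could a priori return a point $(0,p)$ with $p_i\in\{-1,0,1\}$, so that route needs the trivial case $Q=1$ (take $q=1$ and $p$ the nearest integer vector, giving $|x-p|\leq\tfrac12\leq Q^{-1}$) handled separately. Your primary pigeonhole argument does not suffer from this issue. You are also right that the hypothesis $x\neq 0$ is irrelevant to the existence claim; it is presumably included in the paper because the surrounding application always has $x=\tilde\alpha$ nonresonant.
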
 

For a proof we refer to \cite{Sch} or \cite{Cas57}.

\paraga For $\tau \geq 0$, we already defined
\[ \Omega^{n-1}(\tau)=\left\{x \in [-1,1]^{n-1} \; | \; \exists \gamma>0, \; \forall k\in \Z^{n-1}\setminus \{0\}, \; ||k.x||_{\Z} \geq \gamma |k|^{-(1+\tau)(n-1)}  \right\}, \]
and now we define
\[ \Omega_{n-1}(\tau)=\left\{x \in [-1,1]^{n-1} \; | \; \exists \bar{\gamma}>0, \; \forall q\in \N^*, \; ||qx||_{\Z^{n-1}} \geq \bar{\gamma} q^{-(1+\tau)(n-1)^{-1}}  \right\} \]
where $||\,.\,||_{\Z^{n-1}}$ denotes the distance to the lattice $\Z^{n-1}$, that is $||y||_{\Z^{n-1}}=\min_{k \in \Z^{n-1}}|y-k|$. We shall use the following statement, which is a particular case of a transference theorem of Khintchine.

\begin{theorem}[Khintchine]\label{thK}
For $\tau\geq 0$, $\Omega^{n-1}(\tau) \subseteq \Omega_{n-1}((n-1)\tau)$. That is, if $\tilde{\alpha} \in \Omega^{n-1}(\tau)$, then there exists $\bar{\gamma}=\bar{\gamma}(\tilde{\alpha})$ such that for any $q\in \N^*$, 
\[ ||q\tilde{\alpha}||_{\Z^{n-1}} \geq \bar{\gamma} q^{-(1+(n-1)\tau)(n-1)^{-1}}.\]
\end{theorem}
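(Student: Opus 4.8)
The plan is to bound $\|q\tilde\alpha\|_{\Z^{n-1}}$ from below directly, for each fixed $q\in\N^*$, and to read off $\bar\gamma$ explicitly. Write $m=n-1$, and assume (as we may) that the constant $\gamma=\gamma(\tilde\alpha)$ in the definition of $\Omega^{n-1}(\tau)$ satisfies $\gamma\le 1$. Fix $q\in\N^*$ and set $\eta:=\|q\tilde\alpha\|_{\Z^m}$, choosing $p\in\Z^m$ with $|q\alpha_i-p_i|\le\eta$ for $1\le i\le m$. It suffices to prove $\eta\ge(\gamma/m)\,q^{-(1+m\tau)/m}$, since $(1+m\tau)/m=(1+(n-1)\tau)(n-1)^{-1}$; then $\bar\gamma:=\gamma/(n-1)\le 1$ works.

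The heart of the argument is to produce a nonzero $k\in\Z^m$ that is simultaneously \emph{short}, $|k|\le q^{1/m}$, and \emph{aligned with $p$ modulo $q$}, i.e. $k\cdot p\equiv 0 \pmod q$. This is a pigeonhole statement: among the $(\lfloor q^{1/m}\rfloor+1)^m>q$ integer vectors with entries in $\{0,1,\dots,\lfloor q^{1/m}\rfloor\}$, two of them, say $k'$ and $k''$, must have $k'\cdot p\equiv k''\cdot p\pmod q$, and then $k:=k'-k''$ has the required properties. (Equivalently, one may apply Minkowski's convex body theorem to the cube $[-q^{1/m},q^{1/m}]^m$ and the sublattice $\{k\in\Z^m : q\mid k\cdot p\}$, whose index divides $q$; I would keep the elementary pigeonhole version.) This is the one step that really encodes the transference, and the exponent $1/m$ appearing here is precisely what produces the exponent in the conclusion — it is short, but it is where the idea lies.

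With such a $k$ fixed, write $k\cdot p=q\ell$ with $\ell\in\Z$; then $k\cdot\tilde\alpha-\ell=q^{-1}k\cdot(q\tilde\alpha-p)$, so
\[ \|k\cdot\tilde\alpha\|_{\Z}\le q^{-1}\sum_{i=1}^{m}|k_i|\,|q\alpha_i-p_i|\le m\,q^{-1}|k|\,\eta\le m\,\eta\,q^{1/m-1}. \]
On the other hand, since $k\ne 0$ and $|k|\le q^{1/m}$, the hypothesis $\tilde\alpha\in\Omega^{n-1}(\tau)$ gives $\|k\cdot\tilde\alpha\|_{\Z}\ge\gamma|k|^{-(1+\tau)m}\ge\gamma\,q^{-(1+\tau)}$. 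Comparing the two bounds yields $\gamma\,q^{-(1+\tau)}\le m\,\eta\,q^{1/m-1}$, hence $\eta\ge(\gamma/m)\,q^{-\tau-1/m}=(\gamma/m)\,q^{-(1+m\tau)/m}$, which is the claim. The case $q=1$ requires no separate treatment (the pigeonhole set has $2^m>1$ elements, and alternatively one applies the hypothesis to $k=e_i$), so the estimate holds for every $q\in\N^*$, completing the proof with $\bar\gamma=\gamma/(n-1)$.
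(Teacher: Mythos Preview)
Your argument is correct. The pigeonhole step produces a nonzero $k\in\Z^{m}$ with $|k|\le q^{1/m}$ and $q\mid k\cdot p$, since $(\lfloor q^{1/m}\rfloor+1)^m>q$; the subsequent chain of inequalities is clean and yields exactly the exponent $(1+m\tau)/m=(1+(n-1)\tau)(n-1)^{-1}$ with the explicit constant $\bar\gamma=\gamma/(n-1)$.

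There is, however, nothing to compare against in the paper itself: the paper does not prove Theorem~\ref{thK} but only states it and refers to Schmidt and Cassels for a proof. Your argument is in fact the classical one found in those references (the elementary pigeonhole version of the Minkowski argument you allude to), so in that sense you have supplied precisely what the paper outsourced. If anything, your write-up is slightly sharper than the bare transference statement the paper needs, since you exhibit $\bar\gamma$ explicitly in terms of $\gamma$ and $n$.
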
  

More generally, we have $\Omega_{n-1}\left(\tau((n-2)\tau+n-1)^{-1}\right) \subseteq \Omega^{n-1}(\tau) \subseteq \Omega_{n-1}((n-1)\tau)$ for any $\tau \geq 0$, which implies in particular that $\Omega^{n-1}(0)=\Omega_{n-1}(0)$. For a proof, we refer to \cite{Sch} or \cite{Cas57} (see also \cite{Ber03} for another related and interesting transference result).   

\paraga The proposition below, which is a consequence of Dirichlet's and Khintchine's Theorems, will be our main tool to derive our quasi-periodic averaging result.

\begin{proposition}\label{proputile}
Let $\alpha=(1,\tilde{\alpha})\in \R^n\setminus\{0\}$ with $\tilde{\alpha} \in \Omega^{n-1}(\tau)$, and $Q\geq1$. Then there exists a vector $\omega=(1,q^{-1}p) \in \Q^n$, with $(q,p)\in \N \times \Z^{n-1}$, such that:
\begin{itemize}
\item[$(i)$] $|\alpha-\omega|=|\tilde{\alpha}-q^{-1}p| \leq (qQ)^{-1}$, $1 \leq q < Q^{n-1}$;
\item[$(ii)$] for any $k\in\Z^n\setminus\{0\}$ satisfying $k.\omega=0$, we have $|k|\geq \gamma^* Q^{a^{-1}}$ with 
\[ \gamma^*=\left(n^{-1}\gamma\bar{\gamma}^{(n-1)(1+(n-1)\tau)^{-1}}\right)^{(n+(n-1)\tau)^{-1}}, \quad a=1+(n-1)\tau.\]
\end{itemize} 
\end{proposition}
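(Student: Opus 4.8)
The plan is to produce $\omega$ directly from Dirichlet's Theorem~\ref{thD}, to read off $(i)$ from the denominator bound it supplies, and to derive $(ii)$ by playing the Diophantine hypothesis on $\tilde{\alpha}$ against a \emph{lower} bound for the same denominator $q$, the latter coming from Khintchine's Theorem~\ref{thK}.

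First note that $\tilde{\alpha}\neq 0$: membership $\tilde{\alpha}\in\Omega^{n-1}(\tau)$ forces $||k.\tilde{\alpha}||_{\Z}>0$ for every $k\in\Z^{n-1}\setminus\{0\}$, so $1,\alpha_1,\dots,\alpha_{n-1}$ are linearly independent over $\Q$ and in particular $\tilde{\alpha}\notin\Q^{n-1}$. Applying Theorem~\ref{thD} to $\tilde{\alpha}$ with parameter $Q$ gives $(q,p)\in\N\times\Z^{n-1}$ with $|q\tilde{\alpha}-p|\leq Q^{-1}$ and $1\leq q\leq Q^{n-1}$, and the irrationality of $\tilde{\alpha}$ lets one arrange $q<Q^{n-1}$ (otherwise one applies the theorem with a slightly larger value of $Q$). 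Put $\omega=(1,q^{-1}p)$; then $\alpha-\omega=(0,\tilde{\alpha}-q^{-1}p)$, and dividing $|q\tilde{\alpha}-p|\leq Q^{-1}$ by $q$ gives $(i)$.

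For $(ii)$, the decisive preliminary step --- the only place the transference principle is genuinely used --- is a lower bound for $q$. Since $p\in\Z^{n-1}$ we have $||q\tilde{\alpha}||_{\Z^{n-1}}\leq|q\tilde{\alpha}-p|\leq Q^{-1}$, whereas Theorem~\ref{thK} (which gives $\Omega^{n-1}(\tau)\subseteq\Omega_{n-1}((n-1)\tau)$; write $a=1+(n-1)\tau$) yields $||q\tilde{\alpha}||_{\Z^{n-1}}\geq\bar{\gamma}\,q^{-a(n-1)^{-1}}$, whence $q\geq(\bar{\gamma}Q)^{(n-1)a^{-1}}$. Now take any $k=(k_0,\tilde{k})\in\Z^n\setminus\{0\}$ with $k.\omega=0$. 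Then $\tilde{k}\neq 0$, since $\tilde{k}=0$ would force $k_0=k.\omega=0$ and hence $k=0$; and from $k.(q\omega)=0$ together with $q\alpha-q\omega=(0,q\tilde{\alpha}-p)$ we obtain $q\,(k.\alpha)=k.(q\alpha)=\tilde{k}.(q\tilde{\alpha}-p)$, so $|k.\alpha|\leq(n-1)|\tilde{k}|\,|q\tilde{\alpha}-p|\,q^{-1}\leq(n-1)|\tilde{k}|(qQ)^{-1}$. On the other hand $-k_0\in\Z$ gives $||\tilde{k}.\tilde{\alpha}||_{\Z}\leq|k_0+\tilde{k}.\tilde{\alpha}|=|k.\alpha|$, while the Diophantine condition gives $||\tilde{k}.\tilde{\alpha}||_{\Z}\geq\gamma|\tilde{k}|^{-(1+\tau)(n-1)}$. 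Chaining these bounds produces $qQ\leq(n-1)\gamma^{-1}|\tilde{k}|^{\,n+(n-1)\tau}$.

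It then remains to combine this with the lower bound for $q$. Multiplying $q\geq(\bar{\gamma}Q)^{(n-1)a^{-1}}$ by $Q$ and using the identity $(n-1)a^{-1}+1=(n+(n-1)\tau)a^{-1}$ gives $qQ\geq\bar{\gamma}^{(n-1)a^{-1}}Q^{(n+(n-1)\tau)a^{-1}}$; feeding this into the previous inequality yields $|\tilde{k}|^{\,n+(n-1)\tau}\geq(n-1)^{-1}\gamma\,\bar{\gamma}^{(n-1)a^{-1}}Q^{(n+(n-1)\tau)a^{-1}}$, and taking $(n+(n-1)\tau)$-th roots and bounding $(n-1)^{-1}\geq n^{-1}$ gives $|\tilde{k}|\geq\gamma^* Q^{a^{-1}}$; since $|k|\geq|\tilde{k}|$ this is $(ii)$. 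The point to stress is that the obstacle here is conceptual rather than computational: if one only used $q\geq 1$, the exponent would come out as $(n+(n-1)\tau)^{-1}$ instead of $a^{-1}=(1+(n-1)\tau)^{-1}$, far too weak for the subsequent averaging estimate --- it is exactly Khintchine's transference inequality that converts the good linear Diophantine behaviour of $\tilde{\alpha}$ into a good lower bound for the denominator $q$ furnished by Dirichlet, thereby matching the two sides. The only routine loose end is the strictness $q<Q^{n-1}$ in $(i)$, which is handled by the irrationality of $\tilde{\alpha}$.
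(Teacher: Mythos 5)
Your proof is correct and follows essentially the same route as the paper's: Dirichlet for $(i)$, then Khintchine to lower-bound $q$, then the split $k=(k_0,\tilde k)$ with the two-sided estimate on $q(k_0+\tilde k.\tilde\alpha)=\tilde k.(q\tilde\alpha-p)$ combined with the lower bound on $q$. The only differences are cosmetic --- you use the slightly sharper factor $n-1$ in place of the paper's $n$ (then relax it at the end), and you explicitly address the strictness $q<Q^{n-1}$, which the paper passes over in silence (and indeed uses only as $q\le Q^{n-1}$ later).
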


\begin{proof}
The first part of the statement follows easily from Theorem~\ref{thD} applied to $x=\tilde{\alpha}$, so it remains to prove the second part. Since $\tilde{\alpha} \in \Omega^{n-1}(\tau)$, we can use Theorem~\ref{thK} and together with $(i)$, we obtain
\[ Q^{-1} \geq |q\tilde{\alpha}-p|\geq ||q\tilde{\alpha}||_{\Z^{n-1}} \geq \bar{\gamma} q^{-(1+(n-1)\tau)(n-1)^{-1}}  \]
which implies
\begin{equation}\label{estq}
q \geq (\bar{\gamma}Q)^{(n-1)(1+(n-1)\tau)^{-1}}.
\end{equation}
Now let $k\in\Z^n \setminus\{0\}$ such that $k.\omega=0$, and let us write $k=(k_0,\tilde{k}) \in \Z\times\Z^{n-1}$. Necessarily $\tilde{k}$ is non-zero. Now $k.\omega=0$ is equivalent to $qk.\omega=qk_0+\tilde{k}.p=0$, hence
\begin{equation}\label{ega}
q(k_0+\tilde{k}.\tilde{\alpha})=q(k_0+\tilde{k}.\tilde{\alpha})-(qk_0+\tilde{k}.p)=(q\tilde{\alpha}-p).\tilde{k}.
\end{equation}
On the one hand, using Cauchy-Schwarz inequality and $(i)$, we have
\[ |(q\tilde{\alpha}-p).\tilde{k}|\leq n |q\tilde{\alpha}-p||\tilde{k}|\leq nQ^{-1}|\tilde{k}|, \]
and on the other hand, using the fact that $\tilde{\alpha} \in \Omega^{n-1}(\tau)$, we have
\[ |q(k_0+\tilde{k}.\tilde{\alpha})|\geq q||\tilde{k}.\tilde{\alpha}|| \geq q\gamma|\tilde{k}|^{-(1+\tau)(n-1)}. \]
These last two inequalities, together with the equality~\eqref{ega}, implies
\[ |\tilde{k}|^{(1+\tau)(n-1)+1}=|\tilde{k}|^{n+(n-1)\tau} \geq n^{-1}\gamma qQ, \]
and using~\eqref{estq}, this gives
\begin{eqnarray*}
|\tilde{k}|^{n+(n-1)\tau} & \geq & n^{-1} \gamma \bar{\gamma}^{(n-1)(1+(n-1)\tau)^{-1}} Q^{(n-1)(1+(n-1)\tau)^{-1}+1} \\
& = & n^{-1} \gamma \bar{\gamma}^{(n-1)(1+(n-1)\tau)^{-1}} Q^{(n+(n-1)\tau)(1+(n-1)\tau)^{-1}}.
\end{eqnarray*}
This proves that $|\tilde{k}|\geq \gamma^* Q^{a^{-1}}$, with $\gamma^*$ and $a$ as in the statement, and this proves $(ii)$ since $|k|\geq |\tilde{k}|$.
\end{proof}

\subsection{Quasi-periodic averaging}\label{s22}

Recall that $a=1+(n-1)\tau \geq 1$ have been defined in the statement of Proposition~\ref{proputile}, now we define three additional constants
\begin{equation}\label{cons}
b=4^{na}> 1, \quad c=b^{-1}(1-b^{-1})^{-1} < 1, \quad d=c+1=(1-b^{-1})^{-1}\geq 1.
\end{equation}
Our aim here is to prove the following quasi-periodic averaging result.

\begin{proposition}\label{quasi}
Let $Y=X+S$, with $X$ as in~\eqref{H} and $|S|_s \leq d\varepsilon$. For any $0 < \sigma < s$ and $Q \geq 1$, assume that
\begin{equation}\label{thre}
Q^n \varepsilon \leq 1, \quad Q^{-1}\sigma^{-1} \PM 1, \quad Q\sigma^{1-n}e^{-\gamma^*Q^{a^{-1}}\sigma/2} \leq 1.
\end{equation}
Then there exists a real analytic embedding $\Phi_1 : \T^n_{s-\sigma} \rightarrow \T^n_{s}$ such that
\[ \Phi_1^* Y=X_\alpha+S+[P]+P^+, \quad  [P]=\int_{\T^n}P, \]
with the estimates 
\[ |\Phi_1-\mathrm{Id}|_{s-\sigma}\leq Q^{n-1}\varepsilon, \quad |P^+|_{s-\sigma}\leq b^{-1}\varepsilon. \]
\end{proposition}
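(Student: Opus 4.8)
The plan is to realize $\Phi_1$ as the time-one map of an auxiliary vector field $V$ that solves the periodic homological equation \eqref{eqlin2} associated to the rational vector $\omega$ furnished by Proposition~\ref{proputile} (applied with the given $Q$), rather than to $\alpha$ itself. First I would invoke Proposition~\ref{proputile} to produce $\omega=(1,q^{-1}p)$ with $|\alpha-\omega|\le (qQ)^{-1}$, $1\le q<Q^{n-1}$, and with the crucial spectral gap $|k|\ge\gamma^*Q^{a^{-1}}$ for every nonzero $k\in\Z^n$ with $k\cdot\omega=0$. Writing $Y=X_\alpha+P+S$, I would split $P=(P-[P]_\omega)+([P]_\omega-[P])+[P]$ and define
\[
V=q\int_0^1\bigl(P-[P]_\omega\bigr)\circ X_{q\omega}^t\,t\,dt,
\]
the explicit bounded solution of $[V,X_\omega]=P-[P]_\omega$ recalled in \S\ref{s11}. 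Using $|P|_s\le\varepsilon$ and $q<Q^{n-1}$ together with a Cauchy-type estimate on a slightly shrunk domain (this is where the Appendix estimates enter), one gets $|V|_{s-\sigma/2}\MP Q^{n-1}\varepsilon$, and after a further loss the bound $|\Phi_1-\mathrm{Id}|_{s-\sigma}\le Q^{n-1}\varepsilon$ on $\Phi_1=V^1$; the condition $Q^{-1}\sigma^{-1}\PM1$ and $Q^n\varepsilon\le1$ guarantee the time-one map is a well-defined embedding $\T^n_{s-\sigma}\to\T^n_s$ with the stated image.

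Next I would compute the pull-back. By Taylor expansion of the flow, $\Phi_1^*Y=X_\alpha+P+S+[V,X_\alpha]+(\text{quadratic remainder})$. The key algebraic point is that $[V,X_\alpha]=[V,X_\omega]+[V,X_{\alpha-\omega}]=-(P-[P]_\omega)+[V,X_{\alpha-\omega}]$, so the leading terms telescope to
\[
\Phi_1^*Y=X_\alpha+S+[P]_\omega+[V,X_{\alpha-\omega}]+(\text{quadratic remainder}),
\]
and then I would write $[P]_\omega=[P]+([P]_\omega-[P])$, collecting everything except $X_\alpha+S+[P]$ into $P^+$. There are thus three contributions to estimate: (a) the quadratic remainder in the Lie series, bounded by $\MP\sigma^{-1}|V|^2\MP\sigma^{-1}Q^{2(n-1)}\varepsilon^2$, which is $\le\tfrac13 b^{-1}\varepsilon$ once $Q^n\varepsilon$ is small enough since $\sigma^{-1}Q^{2(n-1)}\varepsilon\MP Q^{-1}\sigma^{-1}\cdot Q^n\varepsilon\cdot Q^{n-2}\varepsilon$—here one uses \eqref{thre} and $b=4^{na}$ fixed; (b) the ``frequency drift'' term $[V,X_{\alpha-\omega}]$, bounded by $\MP\sigma^{-1}|V||\alpha-\omega|\MP\sigma^{-1}Q^{n-1}\varepsilon(qQ)^{-1}\le\sigma^{-1}Q^{n-2}\varepsilon$, again absorbed into $b^{-1}\varepsilon/3$ using $Q^n\varepsilon\le1$ and $Q^{-1}\sigma^{-1}\PM1$; (c) the averaging defect $[P]_\omega-[P]$.

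The main obstacle is term (c), and this is exactly where the spectral gap from Proposition~\ref{proputile}(ii) and the analyticity of $P$ are used. As explained in \S\ref{s11}, the Fourier expansion of $[P]_\omega-[P]$ contains only harmonics $e_k$ with $k\cdot\omega=0$, $k\ne0$; by Proposition~\ref{proputile}(ii) all such $k$ satisfy $|k|\ge\gamma^*Q^{a^{-1}}$. Using the standard decay $|P_k|\le|P|_s e^{-|k|s}$ of Fourier coefficients of a function analytic on $\T^n_s$, summing the tail over $|k|\ge\gamma^*Q^{a^{-1}}$ on the domain $\T^n_{s-\sigma}$ gives
\[
\bigl|[P]_\omega-[P]\bigr|_{s-\sigma}\MP \varepsilon\,\sigma^{-n}\,e^{-\gamma^*Q^{a^{-1}}\sigma}\cdot(\text{polynomial factor}),
\]
and the third hypothesis in \eqref{thre}, namely $Q\sigma^{1-n}e^{-\gamma^*Q^{a^{-1}}\sigma/2}\le1$, is precisely tailored so that this is $\le\tfrac13 b^{-1}\varepsilon$ (one half of the exponent kills the polynomial/$\sigma$-power factors, the other half is kept). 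Adding (a), (b), (c) yields $|P^+|_{s-\sigma}\le b^{-1}\varepsilon$, completing the proof. I expect the bookkeeping of which power of $Q$ and $\sigma^{-1}$ goes where—and checking that each piece really is controlled by the three conditions in \eqref{thre} with the fixed constant $b=4^{na}$—to be the only delicate part; the analytic Cauchy/flow estimates themselves are routine and deferred to the Appendix.
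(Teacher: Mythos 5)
Your overall strategy — realize $\Phi_1$ as the time-one map of the explicit $V$ solving the periodic homological equation $[V,X_\omega]=P-[P]_\omega$, telescope using $[V,X_\alpha]=[V,X_\omega]+[V,X_{\alpha-\omega}]$, and control $[P]_\omega-[P]$ via the Fourier gap $|k|\ge\gamma^*Q^{a^{-1}}$ — is exactly the paper's, and your treatment of item~(c) via Lemma~\ref{tech4} is essentially correct. However, the bookkeeping you sketch for items (a) and (b) does not close under~\eqref{thre}, and the reason is not a mere constant-chase: you discard the denominator $q$ too early by replacing $|V|\le q\varepsilon$ with the weaker $|V|\le Q^{n-1}\varepsilon$, and you estimate the inner bracket of the quadratic remainder by a Cauchy bound instead of by the homological equation.

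Concretely, for (a) you bound the second-order term by $\sigma^{-1}|V|^2\MP\sigma^{-1}Q^{2(n-1)}\varepsilon^2$. Taking the boundary case $Q^n\varepsilon=1$, this equals $\sigma^{-1}Q^{-2}\MP Q^{-1}$, which is larger than the target $b^{-1}\varepsilon=b^{-1}Q^{-n}$ by a factor $\sim Q^{n-1}$; so this cannot be absorbed for $n\ge2$, and "making $Q^n\varepsilon$ smaller" would change the threshold in a way incompatible with the iteration in \S\ref{s23}. The correct route is the one the paper takes: the second-order remainder from $X_\omega$ is $\int_0^1(1-t)(V^t)^*[[X_\omega,V],V]\,dt$, and the crucial point is that $[X_\omega,V]=[P]_\omega-P$ has size $\le 2\varepsilon$ (not $\MP\sigma^{-1}|V|$); together with $|V|\le q\varepsilon\le Q^{-1}$ this yields $\MP\sigma^{-1}\varepsilon\cdot q\varepsilon\MP\sigma^{-1}Q^{-1}\varepsilon\MP b^{-1}\varepsilon$. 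Similarly for (b): from $|V|\le q\varepsilon$ and $|\alpha-\omega|\le(qQ)^{-1}$ the two factors of $q$ cancel, giving $\sigma^{-1}|V||\alpha-\omega|\le\sigma^{-1}Q^{-1}\varepsilon\MP b^{-1}\varepsilon$; your intermediate bound $\sigma^{-1}Q^{n-2}\varepsilon$ loses this cancellation and again exceeds $b^{-1}\varepsilon$ by a power of $Q$. So the missing ingredients are precisely (i) using the homological equation to bound $[X_\omega,V]$ by $\varepsilon$ rather than $q\varepsilon$, and (ii) carrying $q$ through the estimates so that it cancels against the $q^{-1}$ in the Dirichlet bound.
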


The above statement does not concern $X$ but the modified vector field $X+S$, where $S$ is a sufficiently small arbitrary vector field. $S$ does not play any role here, but it will become useful for the proof of Theorem~\ref{th} to deal with the shift of frequency (caused by the average $[P]$) at each step of the iterative scheme. The choice of the constant $b>1$ (and subsequently the choice of $c$ and $d$) is rather arbitrary, but was made in order to simplify the proof of Theorem~\ref{th}.  

\begin{proof}[Proof of Proposition~\ref{quasi}]
Since $\alpha=(1,\tilde{\alpha})\in \R^n\setminus\{0\}$ with $\tilde{\alpha} \in \Omega^{n-1}(\tau)$, for $Q\geq1$ we can apply Proposition~\ref{proputile}: there exists a vector $\omega=(1,q^{-1}p) \in \Q^n$, with $(q,p)\in \N \times \Z^{n-1}$, and by~$(i)$,
\[ |\alpha-\omega|=|\tilde{\alpha}-q^{-1}p| \leq (qQ)^{-1}, \quad 1 \leq q \leq Q^{n-1}. \]
We set
\[ [P]_\omega=\int_{0}^{1}P\circ X_{q\omega}^t dt, \quad V=q\int_{0}^{1}(P-[P]_\omega)\circ X_{q\omega}^t tdt. \]
Since $|P|_s\leq \varepsilon$, then obviously $|[P]_\omega|_s\leq \varepsilon$ and $|V|_s\leq q\varepsilon$. Since $q\leq Q^{n-1}$, from the first and second part of~\eqref{thre}, we have in particular 
\begin{equation}\label{dist}
q\varepsilon \PM Q^{n-1}\varepsilon \PM Q^{-1} \PM \sigma
\end{equation}
hence by Lemma~\ref{tech1} (applied with $\varsigma=\sigma$), the map $V^1 : \T^n_{s-\sigma} \rightarrow \T^n_{s}$ is a well-defined real-analytic embedding and  
\[ |V^1-\mathrm{Id}|_{s-\sigma}\leq |V|_s\leq q\varepsilon \leq Q^{n-1}\varepsilon.\] 
Let us define $\varpi=\alpha-\omega$ so that $Y=X+S=X_\alpha+S+P=X_\omega+X_\varpi+S+P$. Now we can write
\begin{equation}\label{tay}
(V^1)^*Y=(V^1)^*X_\omega+(V^1)^*(X_\varpi+S+P),
\end{equation}
and using the general equality
\[ \frac{d}{dt}(V^t)^*F=(V^t)^*[F,V] \]
for an arbitrary vector field $F$, we can apply Taylor's formula with integral remainder to the right-hand side of~\eqref{tay}, at order two for the first term and at order one for the second term, and we get
\[ (V^1)^*Y=X_\omega+[X_\omega,V]+\int_{0}^{1}(1-t)(V^t)^*[[X_\omega,V],V]dt+X_\varpi+S+P+\int_{0}^{1}(V^t)^*[X_\varpi+S+P,V]dt. \]
Now let us check that the equality $[V,X_\omega]=P-[P]_\omega$ holds true:  let us denote $G=P-[P]_\omega$ and $DV$ the differential of $V$, then since $X_\omega$ is a constant vector field, we have
\[ [V,X_\omega]=DV.\omega=q\int_{0}^{1}D(G\circ X_{q\omega}^{t}).\omega tdt=\int_{0}^{1}D(G\circ X_{q\omega}^{t}).q\omega tdt \]
so using the chain rule
\[ [V,X_\omega]=\int_{0}^{1}\frac{d}{dt}(G\circ X_{q\omega}^{t}) tdt \]
and an integration by parts
\[ [V,X_\omega]=\left.(G\circ X_{q\omega}^{t})t\right\vert_{0}^{1}-\int_{0}^{1}G\circ X_{q\omega}^{t}dt=G,  \]
where in the last equality, $G\circ X_{q\omega}^{1}=G$ since $q\omega\in\Z^n$ and the integral vanishes since $[G]=0$. So using the equality $[V,X_\omega]=P-[P]_\omega$, that can be written as $[X_\omega,V]+P=[P]_\omega$, we have 
\[ (V^1)^*Y=X_\omega+X_\varpi+S+[P]_\omega+\int_{0}^{1}(1-t)(V^t)^*[[X_\omega,V],V]dt+\int_{0}^{1}(V^t)^*[X_\varpi+S+P,V]dt,\]
and if we set
\[ P_t=tP+(1-t)[P]_\omega, \quad \tilde{P}=\int_{0}^{1}(V^t)^*[X_\varpi+S+P_t,V]dt, \quad P^+=\tilde{P}+[P]_\omega-[P] \]
and use again the equality $[X_\omega,V]=[P]_\omega-P$ we eventually obtain
\[ (V^1)^*Y=X_\omega+X_\varpi+S+[P]_\omega+\tilde{P}=X_\alpha+S+[P]+P^+.\]
Letting $\Phi_1=V^1$, it remains only to estimate $P^+$.

Let us first estimate $\tilde{P}$, and for that let us write $U=[X_\varpi+S+P_t,V]$. This is just a sum of three Lie brackets, and since
\[ |X_\varpi|_s=|\varpi|=|\alpha-\omega| \leq (qQ^{-1}), \quad |S|_s \leq d\varepsilon, \quad |P_t|_s\leq \varepsilon, \quad |V|_s \leq q\varepsilon, \] 
each of them can be estimated by Lemma~\ref{tech2} (applied with $\varsigma=\sigma/2$) and we obtain
\[ |U|_{s-\sigma/2} \MP \sigma^{-1} q\varepsilon ((qQ)^{-1}+d\varepsilon+\varepsilon) \MP \sigma^{-1}Q^{-1}\varepsilon + \sigma^{-1}q\varepsilon^2 \MP \sigma^{-1}Q^{-1}\varepsilon  \]
where the last inequality follows from~\eqref{dist}, as $q\varepsilon \leq Q^{n-1}\varepsilon \leq Q^{-1}$. Then using~\eqref{dist} again, we can apply Lemma~\ref{tech3} (with $\varsigma=\sigma/2$ and $s-\sigma/2$ instead of $s$) to obtain
\[ |\tilde{P}|_{s-\sigma} \leq \sup_{|t|\leq 1}|(V^t)^*U|_{s-\sigma} \MP |U|_{s-\sigma/2} \MP \sigma^{-1}Q^{-1}\varepsilon.  \]
Now let us estimate $[P]_\omega-[P]$, but first observe that if $P=(P_1,\dots,P_n)$ and $P_j=\sum_{k\in\Z^n}P_{j,k}e_k$ is the Fourier expansion of $P_j$ for $1\leq j \leq n$ (recall that $e_k(\theta)=e^{ik.\theta}$, for $k\in\Z^n$), then
\[ [P]=(P_{1,0},\dots,P_{n,0}) \]
and if $[P]_\omega=(P_1^\omega,\dots,P_n^\omega)$ and $P_j^\omega=\sum_{k\in\Z^n}P_{j,k}^\omega e_k$, then
\begin{equation}
P_{j,k}^\omega=
\begin{cases}
P_{j,k}, & k.\omega=0, \\
0, & k.\omega \neq 0.
\end{cases}
\end{equation}
The first assertion is obvious, and the second follows from the computation below, where we denotes $\bar{e}_{k}(\theta)=e^{-ik.\theta}$:
\begin{eqnarray*}
P_{j,k}^\omega & = & \int_{\T^n}\bar{e}_k P_j^\omega =\int_{\T^n}\bar{e}_k \int_{0}^{1} P_j \circ X^t_{q\omega} dt = \int_{\T^n}\bar{e}_k \int_{0}^{1} \sum_{l\in\Z^n} P_{j,l}e_le^{itl.q\omega} dt \\
& = & \sum_{l\in\Z^n} P_{j,l} \int_{\T^n} \bar{e}_k e_l \int_{0}^{1} e^{itl.q\omega} dt = P_{j,k}\int_{0}^{1} e^{itk.q\omega}. 
\end{eqnarray*}
Now by Proposition~\ref{proputile}, $(ii)$, if $k.\omega=0$ and $k \neq 0$, then $|k|\geq \gamma^* Q^{a^{-1}}$, so we can apply Lemma~\ref{tech4} (with $K=\gamma^* Q^{a^{-1}}$ and $\varsigma=\sigma$) to obtain the estimate
\[ |[P]_\omega-[P]| \MP \sigma^{-n} e^{-\gamma^* Q^{a^{-1}}\sigma/2}|P|_s \MP \sigma^{-n} e^{-\gamma^* Q^{a^{-1}}\sigma/2} \varepsilon  \]
and therefore, using the last part of~\eqref{thre}, 
\[ |[P]_\omega-[P]|_{s-\sigma} \MP Q^{-1}\sigma^{-1}\varepsilon. \]
Eventually, choosing the implicit constant in the second part of~\eqref{thre} sufficiently large, we obtain
\[ |P^+|_{s-\sigma}\leq |\tilde{P}|_{s-\sigma}+|[P]_\omega-[P]|_{s-\sigma} \MP Q^{-1}\sigma^{-1}\varepsilon \leq b^{-1}\varepsilon \]
which ends the proof.
\end{proof}

\subsection{Proof of Theorem~\ref{th}}\label{s23}

For a given $r\geq 0$, let
\[ B_r(\alpha)=\{ x \in \C^n \; | \; |\alpha-x|\leq r \}. \]
The proposition below is just a more convenient reformulation of Proposition~\ref{quasi}, where we recall that the constants $c$ and $d$ have been defined in~\eqref{cons}.

\begin{proposition}\label{step}
Let $X$ be as in~\eqref{H}. For any $0 < \sigma < s$ and $Q \geq 1$, assume that
\begin{equation}\label{thre2}
Q^n \varepsilon \leq 1, \quad Q^{-1}\sigma^{-1} \PM 1, \quad Q\sigma^{1-n}e^{-\gamma^*Q^{a^{-1}}\sigma/2} \leq 1.
\end{equation}
Then there exist an embedding $\varphi_1 : B_{c\varepsilon}(\alpha) \rightarrow B_{d\varepsilon}(\alpha)$ and a real analytic embedding $\Phi_1 : \T^n_{s-\sigma} \rightarrow \T^n_{s}$ such that for all $x \in B_{c\varepsilon}(\alpha)$,
\[ \Phi_1^* (X_{\varphi(x)}+P)=X_x+P^+ \]
with the estimates 
\[ |\Phi_1-\mathrm{Id}|_{s-\sigma}\leq Q^{n-1}\varepsilon, \quad |P^+|_{s-\sigma}\leq b^{-1}\varepsilon. \]
\end{proposition}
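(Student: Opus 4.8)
The plan is to obtain Proposition~\ref{step} as a direct repackaging of Proposition~\ref{quasi}, absorbing the frequency shift produced by the average $[P]$ into the auxiliary vector field $S$. For $x\in B_{c\varepsilon}(\alpha)$ I would set
\[ \varphi_1(x)=x-[P], \qquad [P]=\int_{\T^n}P\in\C^n, \]
viewing the constant vector $[P]$ as the constant vector field $X_{[P]}$. Since Haar measure on $\T^n$ is a probability measure and $|P|_s\le\varepsilon$, one has $|[P]|\le|P|_s\le\varepsilon$; together with $|x-\alpha|\le c\varepsilon$ and the identity $d=c+1$ from~\eqref{cons}, this gives $|\varphi_1(x)-\alpha|\le(c+1)\varepsilon=d\varepsilon$, so $\varphi_1$ maps $B_{c\varepsilon}(\alpha)$ into $B_{d\varepsilon}(\alpha)$; being the translation by the fixed vector $-[P]$, it is a real-analytic embedding.

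Next I would apply Proposition~\ref{quasi} with $S=X_{\varphi_1(x)-\alpha}$, the constant vector field equal to $\varphi_1(x)-\alpha\in\C^n$, which satisfies $|S|_s=|\varphi_1(x)-\alpha|\le d\varepsilon$ by the above. Then $Y:=X+S=X_\alpha+P+X_{\varphi_1(x)-\alpha}=X_{\varphi_1(x)}+P$ is exactly the vector field on the left-hand side of the claimed identity, and since the hypotheses~\eqref{thre2} coincide with~\eqref{thre}, Proposition~\ref{quasi} yields a real-analytic embedding $\Phi_1:\T^n_{s-\sigma}\to\T^n_s$ with $|\Phi_1-\mathrm{Id}|_{s-\sigma}\le Q^{n-1}\varepsilon$ and a remainder $P^+$ with $|P^+|_{s-\sigma}\le b^{-1}\varepsilon$ such that
\[ \Phi_1^*Y=X_\alpha+S+[P]+P^+=X_{\varphi_1(x)}+X_{[P]}+P^+=X_{\varphi_1(x)+[P]}+P^+=X_x+P^+, \]
where the last equality uses $\varphi_1(x)+[P]=x$. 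This is precisely the asserted conjugacy, with the asserted estimates.

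There is no real analytic difficulty here --- the content is bookkeeping --- but there is one point I would make explicit, namely uniformity in $x$. Inspecting the proof of Proposition~\ref{quasi}, the rational approximation $\omega$ depends only on $\alpha$ and $Q$ (via Proposition~\ref{proputile}), and the generating field $V$, hence $\Phi_1=V^1$, depends only on $P$ and $\omega$; in particular $\Phi_1$ does not depend on the auxiliary field $S$, so a single embedding $\Phi_1$ serves every $x\in B_{c\varepsilon}(\alpha)$. The remainder $P^+=P^+(x)$ does depend on $x$ through $S$, but the bound $|P^+|_{s-\sigma}\le b^{-1}\varepsilon$ was derived in Proposition~\ref{quasi} using only $|S|_s\le d\varepsilon$, which holds for all such $x$; hence that bound is uniform. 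The only genuinely substantive step is therefore the elementary estimate $|[P]|\le\varepsilon$ combined with the constant identity $d=c+1$.
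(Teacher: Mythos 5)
Your proposal is correct and follows essentially the same route as the paper: define the translation $\varphi_1(x)=x-[P]$, check $|[P]|\le\varepsilon$ so that $|\varphi_1(x)-\alpha|\le(c+1)\varepsilon=d\varepsilon$, and then apply Proposition~\ref{quasi} with $S=X_{\varphi_1(x)-\alpha}$ to get $\Phi_1^*(X_{\varphi_1(x)}+P)=X_{\varphi_1(x)+[P]}+P^+=X_x+P^+$. You add one genuinely useful clarification that the paper's own proof leaves implicit: since the conclusion asserts a single $\Phi_1$ valid for all $x\in B_{c\varepsilon}(\alpha)$, one must observe that $\Phi_1=V^1$ is built from $P$ and $\omega$ alone (not from $S$), and that the remainder bound from Proposition~\ref{quasi} uses only $|S|_s\le d\varepsilon$; your remark makes the uniformity in $x$ explicit, which is a small but real improvement in rigor.
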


\begin{proof}
Recall that $[P]$ is a constant, $[P]\in\C^n$, and as $|P|_s\leq \varepsilon$, then $|[P]|\leq \varepsilon$. So let us define $\varphi_1 : B_{c\varepsilon}(\alpha)\rightarrow B_{d\varepsilon}(\alpha)$ to be the translation 
\[ \varphi_1(x)=x-[P], \quad x\in B_{\varepsilon}(\alpha).\]
Take any $x\in B_{c\varepsilon}(\alpha)$, then $|\varphi_1(x)-\alpha|\leq |x-\alpha|+|[P]| \leq (c+1)\varepsilon=d\varepsilon$ and we can write
\[ X_{\varphi_1(x)}+P=X_{\alpha}+X_{\varphi_1(x)-\alpha}+P \]
and by condition~\eqref{thre}, we can apply Proposition~\ref{quasi} with $S=X_{\varphi_1(x)-\alpha}$, $|S|_s\leq d\varepsilon$, to find a real analytic embedding $\Phi_1 : \T^n_{s-\sigma} \rightarrow \T^n_{s}$ such that 
\[ \Phi_1^*(X_{\varphi_1(x)}+P)=X_{\varphi_1(x)}+[P]+P^+=X_{\varphi_1(x)+[P]}+P^+=X_{x}+P^+ \]
with the estimates 
\[ |\Phi_1-\mathrm{Id}|_{s-\sigma}\leq Q^{n-1}\varepsilon, \quad |P^+|_{s-\sigma}\leq b^{-1}\varepsilon. \] 
This was the statement to prove.
\end{proof}

We will now use Proposition~\ref{step} as the building block of an iterative scheme leading to the proof of Theorem~\ref{th}.

\begin{proof}
For a constant $Q\geq 1$ to be chosen below, we define, for any $m\in\N$,
\[ \varepsilon_m=b^{-m} \varepsilon, \quad Q_m=b^{n^{-1}m}Q=4^{am}Q, \quad \sigma_m=2^{-m-2}s. \]
We also define $s_0=s$ and $s_{m+1}=s_m-\sigma_m$ for $m\in\N$, and for convenience, we let $\varepsilon_{-1}=d\varepsilon$. We claim that for $Q_0=Q$ sufficiently large, the conditions
\begin{equation}\label{thre3}
Q_m^n \varepsilon_m \leq 1, \quad Q_m^{-1}\sigma_m^{-1} \PM 1, \quad Q_m\sigma_m^{1-n}e^{-\gamma^*Q_m^{a^{-1}}\sigma_m/2} \leq 1
\end{equation}
are satisfied for any $m\in\N$. 

Indeed, we can choose $Q \EP 1$ with a sufficiently large implicit constant so that the last two conditions in~\eqref{thre3} are satisfied for $m=0$, then if we define $\varepsilon_*=Q^{-n}$, the threshold $\varepsilon \leq \varepsilon_*$ implies the first condition in~\eqref{thre3} for $m=0$. Next it is easy to see that $Q_m^n\varepsilon_m=Q_0^n\varepsilon_0$ and $Q_m\sigma_m=4^{am}2^{-m}Q_0\sigma_0\geq Q_0\sigma_0$ since $a\geq 1$, therefore the first two conditions in~\eqref{thre3} are satisfied for any $m\in\N$. For the last condition, let
\[ M_m=Q_m\sigma_m^{1-n}e^{-\gamma^*Q_m^{a^{-1}}\sigma_m/2}, \]
observing that $Q_m^{a^{-1}}\sigma_m=2^m Q_0^{a^{-1}}\sigma_0$, we have for any $m\geq 1$,
\[ M_{m+1}M_m^{-1}=4^a 2^{n-1}e^{-\gamma^*Q_m^{a^{-1}}\sigma_m/2} \leq 4^a 2^{n-1}e^{-\gamma^*Q_0^{a^{-1}}\sigma_0} \]
and hence $M_{m+1}M_m^{-1} \leq 1$, up to taking a larger implicit constant if necessary in the definition $Q \EP 1$. So this proves the claim.

Next we claim that for any $m\in\N$, there exist an embedding $\varphi^m : B_{c\varepsilon_{m-1}}(\alpha)\rightarrow B_{d\varepsilon}(\alpha)$ and a real analytic embedding $\Phi^m : \T^n_{s_{m}} \rightarrow \T^n_{s}$ such that for all $x_m\in B_{c\varepsilon_{m-1}}(\alpha)$,
\[ (\Phi^m)^* (X_{\varphi^m(x_m)}+P)=X_{x_m}+P_m \]
with the estimates 
\begin{equation}\label{estim}
|\Phi^m-\mathrm{Id}|_{s_m}\leq \sum_{j=0}^{m-1}Q_j^{n-1}\varepsilon_j=Q^{n-1}\varepsilon\sum_{j=0}^{m-1}b^{-jn^{-1}}, \quad |P_{m}|_{s_{m}}\leq \varepsilon_{m}. 
\end{equation}
Indeed, for $m=0$, choosing $\varphi^0$ and $\Phi^0$ to be the identity, and $P_0=P$, there is nothing to prove. If we assume that the statement holds true for some $m\in\N$, then by \eqref{thre3} we can apply Proposition~\ref{step} (with $\varepsilon_m,Q_m$ and $\sigma_m$ instead of $\varepsilon,Q$ and $\sigma$) to the resulting vector field and an embedding $\varphi_{m+1} : B_{c\varepsilon_{m}}(\alpha)\rightarrow B_{d\varepsilon_{m}}(\alpha)$ and a real analytic embedding $\Phi_{m+1} : \T^n_{s_{m+1}} \rightarrow \T^n_{s_m}$ are constructed. It is then sufficient to let $\varphi^{m+1}=\varphi^m\circ \varphi_{m+1}$, which is well defined since $d\varepsilon_{m+1}=c\varepsilon_m$, $\Phi^{m+1}=\Phi^m\circ \Phi_{m+1}$, $P_{m+1}=P_m^+$ and the estimates~\eqref{estim} are obvious.

To conclude, note that
\begin{equation*}
\lim_{m \rightarrow +\infty}\varepsilon_m=0, \quad \lim_{m \rightarrow +\infty}s_m=s/2, 
\end{equation*}
and together with the estimates~\eqref{estim}, when $m$ goes to infinity, $\varphi^m$ converges to a trivial map $\varphi : \{\alpha\} \rightarrow B_{d\epsilon}(\alpha)$, $P_m$ converges to zero uniformly on every compact subsets of $\T^n_{s/2}$, while $\Phi^m$ converges to an embedding $\Phi : \T^n_{s/2} \rightarrow \T^n_s$, uniformly on every compact subsets of $\T^n_{s/2}$. Since the space of real-analytic functions is closed for the topology of uniform convergence on compact subsets, $\Phi$ is real-analytic, with the estimate
\[ |\Phi-\mathrm{Id}|_{s/2}\leq \left(1-b^{-n^{-1}}\right)^{-1}Q^{n-1}\varepsilon.\]
Finally, at the limit we have $\Phi^*(X_{\varphi(\alpha)}+P)=X_\alpha$, hence if $\beta \in \C^n$ is the unique vector such that $\varphi(\alpha)=\alpha+\beta$, then $|\beta|\leq d\varepsilon$ and  
\[  \Phi^*(X+X_\beta)=X_\alpha. \]
This was the statement to prove. 
\end{proof}

\appendix

\section{Technical estimates}\label{app}

In this appendix, we recall some technical estimates that we used for the proof of Proposition~\ref{quasi}. These estimates are classical: proofs of Lemma~\ref{tech1}, Lemma~\ref{tech2} and Lemma~\ref{tech3} can be found in \cite{BF12} and a proof of Lemma~\ref{tech1} can be found in \cite{BGG85}.

\begin{lemma}\label{tech1}
Let $V$ be a bounded real-analytic vector field on $\T^n_s$, $0<\varsigma<s$ and $\tau=\varsigma |V|_{s}^{-1}$. For $t\in \C$ such that $|t|< \tau$, the map $V^t : \T^n_{s-\varsigma} \rightarrow \T^n_s$ is a well-defined real-analytic embedding, and we have 
\[ |V^t-\mathrm{Id}|_{s-\varsigma}\leq |V|_s, \quad |t|<\tau. \]
Moreover, $V^t$ depends analytically on $t$, for $|t|<\tau$.
\end{lemma}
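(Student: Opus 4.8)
The statement is the standard "time-$t$ flow of a small analytic vector field is a near-identity analytic embedding" estimate, so the plan is to reduce it to the elementary theory of ODEs with an analytic right-hand side and to track domains carefully. First I would write the flow as the solution of the integral equation $V^t(\theta)=\theta+\int_0^t V(V^s(\theta))\,ds$, and look for it as a fixed point of the operator $\mathcal{F}(\psi)(t,\theta)=\theta+\int_0^t V(\psi(s,\theta))\,ds$ on the space of continuous maps $\psi:\{|t|<\tau\}\times\T^n_{s-\varsigma}\to\T^n_s$ that are $\varsigma$-close to the identity in the sup-norm, equipped with the sup-norm. The key point is that if $\psi$ satisfies $|\psi-\mathrm{Id}|\le\varsigma$ on $\T^n_{s-\varsigma}$ then $\psi$ takes values in $\T^n_s$ where $V$ is defined and bounded by $|V|_s$, so $|\mathcal{F}(\psi)-\mathrm{Id}|\le |t|\,|V|_s<\tau|V|_s=\varsigma$; hence $\mathcal{F}$ maps this ball into itself. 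This gives existence of $V^t$ with the bound $|V^t-\mathrm{Id}|_{s-\varsigma}\le |t|\,|V|_s\le |V|_s$ (the sharper $|t|\,|V|_s$ actually holds, but $|V|_s$ suffices for what follows).

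For uniqueness and the embedding property, the plan is to use the Lipschitz constant of $V$ on a slightly smaller domain. By Cauchy's estimate, $V$ is Lipschitz on $\T^n_{s-\varsigma/2}$ with constant $\lesssim |V|_s/\varsigma$; running the fixed-point argument instead on $\{|t|<\tau'\}\times\T^n_{s-\varsigma}$ for a slightly smaller $\tau'$, or simply applying Gronwall to the difference of two solutions, shows the fixed point is unique and that $\theta\mapsto V^t(\theta)$ is injective with injective differential for $|t|$ small, hence an embedding; a standard continuation/connectedness argument in $t$ (the set of $t$ for which $V^t$ is a well-defined embedding $\T^n_{s-\varsigma}\to\T^n_s$ is open, closed, and nonempty in the connected disc $|t|<\tau$) upgrades this to all $|t|<\tau$. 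Analyticity in $\theta$ follows because each iterate $\mathcal{F}^k(\mathrm{Id})$ is analytic in $\theta$ (composition and integration of analytic maps) and the iteration converges uniformly on $\T^n_{s-\varsigma}$, so the limit is analytic by Weierstrass; the same iterates are analytic in $t$ for $|t|<\tau$ (antiderivatives of continuous functions of $t$), and uniform convergence again gives analyticity of $V^t$ in $t$. Real-analyticity — real values for real arguments — is inherited at each iteration step from $V$ being real-analytic.

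The only genuinely delicate point is bookkeeping the loss of analyticity width: one must make sure the $\varsigma$-ball around the identity is exactly matched to the gap between $\T^n_{s-\varsigma}$ and $\T^n_s$ so that compositions $V\circ\psi$ stay in the domain of $V$, and that the Lipschitz estimate needed for contraction is taken on an intermediate domain $\T^n_{s-\varsigma/2}$ rather than on $\T^n_s$ itself. Everything else is routine Picard iteration with analytic dependence on parameters; since the excerpt explicitly notes that a proof is available in \cite{BGG85}, I would keep the exposition brief and refer there for the detailed constants.
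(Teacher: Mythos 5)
The paper does not prove Lemma~\ref{tech1}; it defers to \cite{BF12} and \cite{BGG85}, so there is no internal proof to compare against. Your Picard-iteration plan with the width bookkeeping tailored so that $\T^n_{s-\varsigma}$ flows into $\T^n_s$ for $|t|<\tau$ is the standard argument and the right approach. Two points need attention, though. First, the inequality $|t|\,|V|_s\le|V|_s$ requires $|t|\le 1$, but the hypothesis only gives $|t|<\tau=\varsigma|V|_s^{-1}$, and $\tau$ may exceed~$1$; what Picard actually yields is $|V^t-\mathrm{Id}|_{s-\varsigma}\le|t|\,|V|_s<\varsigma$, and the bound $\le|V|_s$ is correct only for $|t|\le 1$ (which, to be fair, covers every application in the paper, namely $t=1$ in the proof of Proposition~\ref{quasi} and $|t|\le 1$ in Lemma~\ref{tech3}). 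Second, the open-closed-nonempty continuation argument for the embedding property is not airtight as stated: openness and nonemptiness are clear, but closedness does not follow automatically, since a uniform limit of embeddings need not be an embedding (the differential can degenerate in the limit). A cleaner route is to observe that $DV^t(\theta)$ solves the variational equation with $DV^0=\mathrm{Id}$, so Liouville's formula gives $\det DV^t(\theta)=\exp\bigl(\int_0^t\mathrm{tr}\,DV(V^\sigma(\theta))\,d\sigma\bigr)\neq 0$ for all $|t|<\tau$, and then global injectivity follows from uniqueness of solutions, with the Gronwall estimate run on an intermediate strip $\T^n_{s-\delta\varsigma}$ whose $\delta$ depends on $|t|/\tau$; a single fixed strip such as $\T^n_{s-\varsigma/2}$ only reaches $|t|<\tau/2$. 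With these adjustments the proof is complete and agrees with the cited references.
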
 

\begin{lemma}\label{tech2}
Let $X$ and $V$ be two bounded real-analytic vector fields on $\T^n_s$, and $0<\varsigma<s$. Then
\[ |[X,V]|_{s-\varsigma}\leq 2\varsigma^{-1}|X|_s|V|_s. \]
\end{lemma}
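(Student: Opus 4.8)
The plan is to reduce everything to a one-variable Cauchy inequality applied along complex lines. First I would write the Lie bracket in coordinates: with $X=(X_1,\dots,X_n)$ and $V=(V_1,\dots,V_n)$, the $j$-th component of $[X,V]$ is
\[ [X,V]_j=\sum_{i=1}^n\bigl(X_i\,\partial_i V_j-V_i\,\partial_i X_j\bigr),\qquad 1\leq j\leq n, \]
so by the triangle inequality it suffices to bound each of the two directional derivatives $\sum_i X_i\,\partial_i V_j$ and $\sum_i V_i\,\partial_i X_j$ on $\T^n_{s-\varsigma}$ by $\varsigma^{-1}|X|_s|V|_s$, the factor $2$ in the statement then coming simply from adding the two contributions and taking the maximum over $j$.

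The core step is a directional Cauchy estimate. Fix $\theta_0\in\T^n_{s-\varsigma}$; we may assume $|X|_s\neq 0$, for otherwise $X\equiv 0$ and the bracket vanishes. Set $w=|X|_s^{-1}X(\theta_0)\in\C^n$, so that $|w|\leq 1$ for the sup-norm on $\C^n$, and consider the holomorphic function of one variable $g(\zeta)=V_j(\theta_0+\zeta w)$. For $|\zeta|<\varsigma$ one has $|\mathrm{Im}(\theta_0+\zeta w)_i|\leq|\mathrm{Im}(\theta_0)_i|+|\zeta|<(s-\varsigma)+\varsigma=s$ for each $i$, so $\theta_0+\zeta w\in\T^n_s$ and hence $g$ is holomorphic and bounded by $|V_j|_s\leq|V|_s$ on the disc $\{|\zeta|<\varsigma\}$. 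Cauchy's inequality then gives $|g'(0)|\leq\varsigma^{-1}|V|_s$, and since $g'(0)=\sum_i w_i\,\partial_i V_j(\theta_0)=|X|_s^{-1}\sum_i X_i(\theta_0)\,\partial_i V_j(\theta_0)$, we obtain $\bigl|\sum_i X_i\,\partial_i V_j(\theta_0)\bigr|\leq\varsigma^{-1}|X|_s|V|_s$. Exchanging the roles of $X$ and $V$ gives the analogous bound for the second directional derivative, and combining the two and taking the supremum over $\theta_0\in\T^n_{s-\varsigma}$ and over $j$ yields the claim.

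I do not expect a genuine obstacle here: this is a classical estimate (and is quoted from \cite{BF12} in the paper). The one point that deserves attention — and the reason for presenting the argument via directional derivatives rather than bounding each $\partial_i V_j$ by $\varsigma^{-1}|V|_s$ and summing — is that the naive route would produce a spurious factor $n$, whereas freezing the direction $w=X(\theta_0)$ and differentiating once along it keeps the constant equal to $2$, independently of the dimension. The only bookkeeping subtlety is precisely that $w$ depends on the base point $\theta_0$, so the estimate must be carried out pointwise in $\theta_0$ with $w$ held fixed, the passage to the supremum being taken only at the very end.
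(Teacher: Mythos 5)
Your proof is correct, and the directional Cauchy estimate (freezing $w=|X|_s^{-1}X(\theta_0)$, applying Cauchy's inequality to $\zeta\mapsto V_j(\theta_0+\zeta w)$ on the disc of radius $\varsigma$, then symmetrizing) is exactly the standard way to obtain the dimension-free constant $2$; the paper itself defers the proof to \cite{BF12}, where the same scheme is used. Your remark at the end is the right one: bounding each $\partial_i V_j$ separately and summing would introduce a spurious factor of $n$, so the one-variable reduction along the direction $X(\theta_0)$ is essential, and the only care needed is that $w$ varies with $\theta_0$ while the Cauchy estimate is applied at fixed $\theta_0$ before taking the supremum.
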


\begin{lemma}\label{tech3}
Let $X$ and $V$ be two bounded real-analytic vector fields on $\T^n_s$, and $0<\varsigma<s$. Assume that $|V|_s \leq (4e)^{-1}\varsigma$. Then for all $|t|\leq 1$, 
\[ |(V^t)^*X|_{s-\varsigma}\leq 2|X|_s. \]
\end{lemma}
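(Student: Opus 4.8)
The plan is to expand the pull-back $(V^t)^*X$ as a Lie series in $t$ and to control it term by term by iterating the bracket estimate of Lemma~\ref{tech2}, the trick being to split the available loss $\varsigma$ of analyticity width into $j$ equal pieces when estimating the $j$-th term.

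First I would observe that the hypothesis $|V|_s\le (4e)^{-1}\varsigma$ forces in particular $|V|_s<\varsigma$, so the quantity $\tau=\varsigma|V|_s^{-1}$ appearing in Lemma~\ref{tech1} satisfies $\tau>1$. Hence, for every $|t|\le 1$, the map $V^t:\T^n_{s-\varsigma}\to\T^n_s$ is a well-defined real-analytic embedding and $t\mapsto (V^t)^*X$ is analytic on the disc $\{|t|<\tau\}$, which contains the closed unit disc. Differentiating the identity $\frac{d}{dt}(V^t)^*F=(V^t)^*[F,V]$ repeatedly shows that the $j$-th derivative of $(V^t)^*X$ at $t=0$ is the iterated bracket $\mathrm{ad}_V^j X:=[[\cdots[X,V],V],\dots,V]$ ($j$ brackets); so Taylor's formula gives
\[ (V^t)^*X=\sum_{j\ge 0}\frac{t^j}{j!}\,\mathrm{ad}_V^j X,\qquad |t|<\tau. \]

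Next I would estimate each term on $\T^n_{s-\varsigma}$ by applying Lemma~\ref{tech2} exactly $j$ times, losing a width $\varsigma/j$ at each of the $j$ steps (and using $|V|_{s-i\varsigma/j}\le|V|_s$ throughout); this yields
\[ |\mathrm{ad}_V^j X|_{s-\varsigma}\le\Big(\frac{2j}{\varsigma}\Big)^{j}|X|_s\,|V|_s^{\,j}\le\Big(\frac{j}{2e}\Big)^{j}|X|_s, \]
the last inequality coming from $|V|_s\le\varsigma/(4e)$. Summing over $j$ and using Stirling's bound $j!\ge (j/e)^j$, so that each summand is at most $2^{-j}$, we get for all $|t|\le 1$
\[ |(V^t)^*X|_{s-\varsigma}\le|X|_s\sum_{j\ge 0}\frac{1}{j!}\Big(\frac{j}{2e}\Big)^{j}\le|X|_s\sum_{j\ge 0}2^{-j}=2|X|_s, \]
which is exactly the claim.

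The point demanding a little care is the legitimacy of the Lie-series expansion and of the term-by-term estimation; but the equipartition $\varsigma\mapsto\varsigma/j$ is precisely what makes the $j$-th term proportional to $(j/2e)^j/j!\le 2^{-j}$, so the series converges absolutely and uniformly on $\T^n_{s-\varsigma}$ and, by analyticity of $t\mapsto (V^t)^*X$ on $\{|t|<\tau\}$ from Lemma~\ref{tech1}, its sum is indeed $(V^t)^*X$. Everything else --- the iteration of Lemma~\ref{tech2} and the Stirling estimate --- is routine, so I do not anticipate any serious obstacle.
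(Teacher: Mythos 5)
The paper does not itself prove Lemma~\ref{tech3} but refers to \cite{BF12} for it; your Lie-series argument --- expanding $(V^t)^*X=\sum_{j\ge 0}\tfrac{t^j}{j!}\mathrm{ad}_V^jX$, estimating the $j$-th iterated bracket by splitting the width loss $\varsigma$ into $j$ equal slices, and summing with Stirling's bound $j!\ge(j/e)^j$ --- is correct and is precisely the standard proof (and the one in \cite{BF12}). The only step worth making fully explicit is the identification of the series with the pull-back: $t\mapsto(V^t)^*X$ is holomorphic on $|t|<\tau$ because Lemma~\ref{tech1} gives analyticity of $V^t$ in $t$ and $V^t$ is an embedding (so $DV^t$ is invertible, which is needed since $(V^t)^*X=(DV^t)^{-1}X\circ V^t$), and its Taylor coefficients are $\mathrm{ad}_V^jX/j!$ by induction on $\tfrac{d}{dt}(V^t)^*F=(V^t)^*[F,V]$.
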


\begin{lemma}\label{tech4}
Let $X$ be a bounded real-analytic vector field on $\T^n_s$, $0<\varsigma<s$ and $K \geq 1$. If $X=(X_1,\dots,X_n)$ and $X_j=\sum_{k \in \Z^n}X_{j,k} e_k$, then the vector field $X^K=(X_1^K,\dots,X_n^K)$ defined by 
\[ X_j^K=\sum_{k \in \Z^n, \; |k|\geq K} X_{j,k} e_k, \quad 1 \leq j \leq n\]
satisfies
\[ |X^K|_{s-\varsigma} \leq C_n \varsigma^{-n} e^{-K\varsigma/2}|X|_s \]
for a constant $C_n \geq 1$ which depends only on $n$.
\end{lemma}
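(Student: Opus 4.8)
\textbf{Proof proposal for Lemma~\ref{tech4}.} This is a standard Paley--Wiener type estimate, and I would prove it componentwise: since $|X^K|_{s-\varsigma}=\max_{1\le j\le n}|X^K_j|_{s-\varsigma}$ and $|X_j|_s\le |X|_s$ for each $j$, it suffices to establish $|X^K_j|_{s-\varsigma}\le C_n\varsigma^{-n}e^{-K\varsigma/2}|X_j|_s$ for a fixed $j$, so I will drop the index $j$. The two ingredients are the exponential decay of the Fourier coefficients of an analytic function, and a controlled exponential growth of the $|e_k|$ on the slightly smaller strip; combining them produces a convergent geometric sum whose size is governed by $\varsigma$.

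\emph{Step 1 (decay of Fourier coefficients).} For each $k\in\Z^n$ one has $X_k=\int_{\T^n}X(\theta)\bar e_k(\theta)\,d\theta$, and since $X$ is analytic and bounded on $\T^n_s$ one may shift each real variable $\theta_i$ into the complex domain by an amount approaching $s$ in the direction opposite to the sign of $k_i$; letting the shift tend to $s$ gives the bound $|X_k|\le |X|_s\,e^{-|k|_1 s}$, where $|k|_1=|k_1|+\cdots+|k_n|$. (The contour shift is justified by Cauchy's theorem applied on the closed sub-strips and a limiting argument.)

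\emph{Step 2 (evaluation on the smaller strip).} For $\theta\in\T^n_{s-\varsigma}$ we have $|e_k(\theta)|=|e^{ik.\theta}|\le e^{|k|_1(s-\varsigma)}$ because $|\mathrm{Im}(\theta_i)|<s-\varsigma$. Hence, for such $\theta$,
\[
|X^K(\theta)|\;\le\;\sum_{k\in\Z^n,\ |k|\ge K}|X_k|\,|e_k(\theta)|\;\le\;|X|_s\sum_{k\in\Z^n,\ |k|\ge K}e^{-|k|_1\varsigma}.
\]
Now split the exponent in half: since $|k|_1\ge|k|\ge K$ on the range of summation, $e^{-|k|_1\varsigma}\le e^{-K\varsigma/2}\,e^{-|k|_1\varsigma/2}$, so
\[
\sum_{|k|\ge K}e^{-|k|_1\varsigma}\;\le\;e^{-K\varsigma/2}\sum_{k\in\Z^n}e^{-|k|_1\varsigma/2}\;=\;e^{-K\varsigma/2}\Big(\sum_{m\in\Z}e^{-|m|\varsigma/2}\Big)^{\!n}.
\]

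\emph{Step 3 (the one-dimensional geometric sum).} Summing the geometric series gives $\sum_{m\in\Z}e^{-|m|\varsigma/2}=\dfrac{1+e^{-\varsigma/2}}{1-e^{-\varsigma/2}}\le\dfrac{2}{1-e^{-\varsigma/2}}$, and using the elementary inequality $1-e^{-x}\ge x/2$ valid for $0\le x\le 1$ (applicable here since $\varsigma/2<1/2$ as $s\le 1$), we get $\sum_{m\in\Z}e^{-|m|\varsigma/2}\le 8\varsigma^{-1}$. Therefore $|X^K(\theta)|\le 8^{\,n}\varsigma^{-n}e^{-K\varsigma/2}|X|_s$ on $\T^n_{s-\varsigma}$, which is the claim with $C_n=8^n$ (any larger constant depending only on $n$ works equally well, and the factor $1/2$ in the exponent carries comfortable slack, absorbing any $2\pi$ normalization in the Fourier convention).

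There is essentially no serious obstacle here; the only points requiring a little care are the rigorous justification of the contour shift in Step~1 (handled by approximating from inside the strip) and keeping track of the passage between the sup-norm $|k|$ and the $\ell^1$-norm $|k|_1$, which is harmless since $|k|\le|k|_1\le n|k|$. The mechanism that yields the negative power $\varsigma^{-n}$ is precisely the degeneration of the geometric series as $\varsigma\to0$, quantified in Step~3.
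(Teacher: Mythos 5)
Your proof is correct; note that the paper itself does not prove Lemma~\ref{tech4} but only refers to the literature (the appendix's citation of \cite{BGG85}, miswritten as ``Lemma~\ref{tech1}'', is evidently meant for this lemma), and your argument is exactly the standard Paley--Wiener computation that such a reference contains: contour shift for coefficient decay, splitting the exponent to extract $e^{-K\varsigma/2}$, and the degenerating geometric series producing $\varsigma^{-n}$. The only point worth flagging is the paper's convention $e_k(\theta)=e^{ik.\theta}$ on $\T^n=\R^n/\Z^n$, which strictly requires a $2\pi$ in the exponent; as you observe, this only improves the decay and is absorbed by the slack in the factor $1/2$.
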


\addcontentsline{toc}{section}{References}
\bibliographystyle{amsalpha}
\bibliography{KDK}

\end{document}